\newcommand*{\mailto}[1]{\href{mailto:#1}{\nolinkurl{#1}}}
\newcommand{\arxiv}[1]{\href{http://arxiv.org/abs/#1}{arXiv: #1}}
\def\theequation{\@arabic\c@equation}
\newcommand{\bb}[1]{{\mathbb{#1}}}
\newcommand{\bbN}{{\mathbb{N}}}
\newcommand{\bbR}{{\mathbb{R}}}
\newcommand{\bbC}{{\mathbb{C}}}
\newcommand{\no}{\nonumber}
\newcommand{\lb}{\label}
\newcommand{\bi}{\bibitem}
\newcommand{\f}{\frac}
\newcommand{\la}{\lambda}
\newcommand{\Oh}{O}
\newcommand{\dom}{\operatorname{dom}}
\renewcommand{\ln}{\operatorname{ln}}
\newcommand{\tr}{\operatorname{tr}}
\numberwithin{equation}{section}
\newtheorem{theorem}{Theorem}[section]
\newtheorem{example}[theorem]{Example}
\theoremstyle{definition}
\newtheorem{remark}[theorem]{Remark}
\begin{document}
\title[Eigenvectors from Eigenvalues]{Eigenvectors From Eigenvalues: The Case of One-Dimensional Schr\"odinger Operators}

\author{Fritz Gesztesy}
\address{Department of Mathematics,
Baylor University, One Bear Place \#97328,
Waco, TX 76798-7328, USA}
\email{\mailto{Fritz\_Gesztesy@baylor.edu}}
\urladdr{\url{http://www.baylor.edu/math/index.php?id=935340}}

\author[M.\ Zinchenko]{Maxim Zinchenko}
\address{Department of Mathematics and Statistics,
University of New Mexico, Albuquerque, NM 87131, USA}
\email{\mailto{maxim@math.unm.edu}}
\urladdr{\url{http://www.math.unm.edu/~maxim/}}

\date{\today}
\thanks{Annals of Funct. Anal. (to appear).}
\subjclass[2010]{Primary: 34B24, 34B27, 34L15; Secondary: 34L40, 47A10.}
\keywords{Eigenvalues, eigenvectors, Green's function.}

\begin{abstract}
We revisit an archive submission by P.\ B. Denton, S.\ J. Parke, T.\ Tao, and X.\ Zhang
\cite{DPTZ19} on $n \times n$ self-adjoint matrices from the point of view of self-adjoint Dirichlet Schr\"odinger operators on a compact interval.
\end{abstract}

\maketitle


\section{Introduction} \lb{s1}

To set the stage for this note we briefly summarize comments by Tao \cite{Ta19} on the archive submission by Denton, Parke, Tao, and Zhang \cite{DPTZ19}, and recall the following facts that illustrate an identity involving eigenvectors of a self-adjoint $n \times n$ matrix, $n \in \bbN$, its eigenvalues, and those of a closely related $(n-1) \times (n-1)$ matrix.

Let $A$ be a self-adjoint $n \times n$ matrix with eigenvalues $\lambda_1(A), \dots, \lambda_n(A)$ and corresponding normalized eigenvectors $v_1,\dots,v_n$, and denote by $v_{k,\ell}$ the $\ell$th component of $v_k$. Let $M_j$ be the $(n-1) \times (n-1)$ matrix obtained from $A$ by deleting the $j$th row and column of $A$. Let
$\lambda_1(M_j),\dots,\lambda_{n-1}(M_j)$ denote the eigenvalues of $M_j$. Then the identity proven in \cite{DPTZ19}
reads,
\begin{equation}
|v_{k,\ell}|^2 \prod_{m=1, m \neq k}^n[\lambda_k(A) - \lambda_m(A))]= \prod_{j=1}^{n-1}[\lambda_k(A)- \lambda_j(M_{\ell})].    \lb{1.1}
\end{equation}
Thus, generically, that is, as long as the product on the left-hand side in \eqref{1.1} does not vanish, the component $v_{k,\ell}$ of the eigenvector $v_k$ of $A$ is expressed in terms of the eigenvalues of $A$ and $M_{\ell}$.

A complex analytic proof of \eqref{1.1} notes that if temporarily all eigenvalues of $A$ are assumed to be simple, then
\begin{align}
& (A -z I_n)^{-1} = \sum_{k=1}^n \f{(v_k, \, \cdot \,)_{\bbC^n} v_k}{\lambda_k(A) -z},   \lb{1.2} \\
& \f{\det(M_j - zI_{n-1})}{\det(A - zI_n)}
= \sum_{k=1}^n \f{|v_{k,j}|^2}{\lambda_k(A) - z},     \lb{1.3} \\
& \f{\prod_{k=1}^{n-1} [\lambda_k(M_j) - z]}{\prod_{k=1}^n[\lambda_k(A) - z]}
= \sum_{k=1}^n \f{|v_{k,j}|^2}{\lambda_k(A) - z},     \lb{1.4}
\end{align}
hold, and taking residues in \eqref{1.4} yields \eqref{1.1}. The general case \eqref{1.1} then follows from this by continuity.

The history of identity \eqref{1.1} is rather involved as pointed out in great detail in \cite{DPTZ19} and hence we refer to the extensive list of references therein. (Here we just note that, as discussed in \cite{DPTZ19}, a related identity was already mentioned by Jacobi \cite{Ja34} in 1834 and 100 years later again by L\"owner \cite{Lo34}).

In the next section we demonstrate how to extend this circle of ideas to a class of self-adjoint second-order differential operators on a compact interval.

\section{The case of One-Dimensional Dirichlet Schr\"odinger Operators on a Compact Interval} \lb{s2}

Identity \eqref{1.1} for self-adjoint $n \times n$ matrices can be extended to one dimensional self-adjoint Dirichlet Schr\"odinger operators on compact intervals as follows.

Consider Schr\"odinger operators on a compact interval $[a,b]$ with potential $V$ satisfying
\begin{equation}
V \in L^1((a,b); dx), \, \text{$V$ real-valued a.e.~on $(a,b)$.}     \lb{2.1}
\end{equation}
The pair of matrices $(A,M_j)$ in Section \ref{s1} is then replaced by a pair of Schr\"odinger
operators $\big(H^D, H^D_{x_0}\big)$ in $L^2((a,b); dx)$, $x_0 \in (a,b)$, given by
\begin{align}
& \big(H^D f\big)(x) = - f''(x) + V(x) f(x), \quad x \in (a,b),   \no \\
& f \in \dom\big(H^D\big) = \big\{f \in L^2(a,b);dx) \, \big|\, f, f' \in AC([a,b]); \, f(a) = 0 = f(b);    \lb{2.2} \\
& \hspace*{6.4cm}  (-f'' + Vf) \in L^2(a,b);dx)\big\},   \no \\
& \big(H^D_{x_0} f\big)(x) = - f''(x) + V(x) f(x),   \quad x \in (a,x_0) \cup (x_0,b), \no \\
& f \in \dom\big(H^D_{x_0}\big) = \big\{f \in L^2(a,b);dx) \, \big|\, f, f' \in AC([a,b]); \, f(a) = 0 = f(b);    \lb{2.3} \\
& \hspace*{5cm}   \, f(x_0) = 0; \, (-f'' + Vf) \in L^2(a,b);dx)\big\},   \no
\end{align}
in particular, $H^D_{x_0}$ is thus a direct sum of two Dirichlet Schr\"odinger operators in $L^2((a,b);dx) \simeq
L^2((a,x_0);dx) \oplus L^2((x_0,b);dx)$.

Denoting by $G^D(z,x,x') = (H^D - z I)^{-1}(x,x')$, $x,x' \in [a,b]$, the Green's function (i.e., integral kernel of the resolvent) of $H^D$, and by $G^D_{x_0}(z,x,x')$ that of $H^D_{x_0}$, one verifies (the Krein-type resolvent formula, see, e.g., \cite{GS96})
\begin{align}
\begin{split}
G^D_{x_0}(z,x,x') = G^D(z,x,x') - \big[G^D(z,x_0,x_0)\big]^{-1} G^D(z,x,x_0) G^D(z,x_0,x'),&    \lb{2.4} \\
\quad z \in \bbC \backslash \big(\sigma\big(H^D_{x_0}\big) \cup \sigma(H^D)\big),&
\end{split}
\end{align}
and hence the fact
\begin{align}
\begin{split}
\tr\Big(\big(H^D_{x_0} - z I\big)^{-1} - \big(H^D - z I\big)^{-1}\Big) = - \f{d}{dz} \ln\big(G^D(z,x_0,x_0)\big),&     \lb{2.5} \\
 z \in \bbC \backslash \big(\sigma\big(H^D_{x_0}\big) \cup \sigma(H^D)\big).&
 \end{split}
\end{align}
Here we employed $(d/dz) (H^D-zI)^{-1} = (H^D-zI)^{-2}$ and the fact that the second term on the right-hand side of \eqref{2.4} represents the integral kernel of a rank-one operator. To compute $\tr((H^D_{x_0} - z I)^{-1})$ and
$\tr((H^D - z I)^{-1})$ as the integral of the diagonal Green's functions (i.e., integral kernels on the diagonal) one first assumes that $z < 0$ is sufficiently negative to render $(H^D_{x_0} - z I)^{-1}$ and $(H^D - z I)^{-1}$ nonnegative operators in $L^2((a,b);dx)$ and applies
\cite[p.~65--66]{RS79}, or Mercer's theorem as in \cite[Proposition~5.6.9]{Da07}, followed by analytic continuation with respect to $z \in \bbC \backslash (\sigma(H^D_{x_0}) \cup \sigma(H^D))$.

A comparison of \eqref{2.5} with the standard trace formula
\begin{equation}
\tr\big((B - z I)^{-1} - (B_0 - z I)^{-1}\big) = - \f{d}{dz} \ln(D_{B_0,B}(z)),
\end{equation}
where $B-B_0$ is trace class and $D_{B_0,B}(\, \cdot \,)$ denotes the (relative) Fredholm determinant
\begin{equation}
D_{B_0,B}(z) = \det\big((B-zI)(B_0-zI)^{-1}\big) = \det\big(I + (B-B_0)(B_0-zI)^{-1}\big)
\end{equation}
for the pair of operators $(B_0,B)$ (this extends to much more general situations where only
$[(B - z I)^{-1} - (B_0 - z I)^{-1}]$ is trace class), shows that $G^D(z,x_0,x_0)$ is the precise analog of the Fredholm determinant for the pair $(H^D,H^D_{x_0})$. In particular, $G^D(z,x_0,x_0)$ is the Schr\"odinger operator analog of
\begin{equation}
{\det}_{\bbC^{n-1}}(M_j - zI_{n-1})/{\det}_{\bbC^n}(A - zI_n)
\end{equation}
in \eqref{1.3}.

Denoting
\begin{equation}
\sigma\big(H^D\big) = \big\{\lambda_k\big(H^D\big)\big\}_{k \in \bbN},     \lb{2.9}
\end{equation}
(all eigenvalues of $H^D$ are necessarily simple in this self-adjoint context) the analytic structure of $G^D(z,x,x')$ is of the form
\begin{equation}
G^D(z,x,x') = \sum_{k \in \bbN} \big(\lambda_k\big(H^D\big) - z\big)^{-1} e_k(x) e_k(x'), \quad
z \in \bbC \big\backslash\sigma\big(H^D\big), \;  x, x' \in [a,b],    \lb{2.10}
\end{equation}
with $\{e_k(x)\}_{k \in \bbN} \subset L^2((a,b);dx)$ the normalized eigenfunctions of $H^D$ associated with the
eigenvalues $\lambda_k(H^D)$ of $H^D$. Moreover, the Green's function of any such Schr\"odinger operator is of the form
\begin{equation}
G^D(z,x,x') = W(\psi_b(z,\,\cdot\,), \psi_a(z,\,\cdot\,))^{-1} \begin{cases} \psi_a(z,x) \psi_b(z,x'), & a \leq x \leq x' \leq b, \\
\psi_a(z,x') \psi_b(z,x), & a \leq x' \leq x \leq b,
\end{cases}
\end{equation}
where $\psi_a(z,x)$ (resp., $\psi_b(z,x)$) are distributional solutions of
$H^D \psi(z, \, \cdot \,) = z \psi(z,\, \cdot \,)$, entire in $z$ for fixed $x \in [a,b]$, satisfying the Dirichlet boundary condition at $a$, respectively, at $b$, that is,
\begin{equation}
\psi_a(z,a) = 0, \quad \psi_b(z,b) = 0, \quad z \in \bbC.
\end{equation}
Here $W(f,g) = f(x)g'(x) - f'(x) g(x)$, $x \in [a,b]$, denotes the Wronskian of $f$ and $g$ (with $f,g$ absolutely continuous
on $[a,b]$).

Denoting
\begin{equation}
\sigma\big(H^D_{x_0}\big) = \big\{\lambda_k\big(H^D_{x_0}\big)\big\}_{k \in \bbN},
\lb{2.12a}
\end{equation}
and abbreviating the multiplicity of $\lambda_k\big(H^D_{x_0}\big)$ by $m_k^D(x_0) \in \{1,2\}$,
a first analog of \eqref{1.1} then reads as follows.

\begin{theorem} \lb{t2.1}
Assume \eqref{2.1} and introduce $H^D$, $H_{x_0}^D$, $\sigma\big(H^D\big)$, $\sigma\big(H_{x_0}^D\big)$ as in \eqref{2.1}--\eqref{2.3}, \eqref{2.9} and suppose that
$0 \notin \sigma\big(H^D\big) \cap\sigma\big(H_{x_0}^D\big)$. Then,
\begin{align}
\begin{split}
e_k(x_0)^2 &= \left[\lim_{z \downarrow - \infty} |z|^{-1/2} \,
\f{\displaystyle{\prod_{n\in\bbN} \bigg(1-\f{z}{\lambda_n\big(H^D\big)}\bigg)}}
{\displaystyle{\prod_{\ell \in \bbN} \bigg(1-\f{z}{\lambda_{\ell}\big(H^D_{x_0}\big)}\bigg)^{m_{\ell}^D(x_0)}}}\right]
\\
& \quad \times \lambda_k\big(H^D\big)
\f{  \displaystyle{\prod_{j \in \bbN} \bigg(1 - \f{\lambda_k\big(H^D\big)}{\lambda_j\big(H_{x_0}\big)}\bigg)^{m_j^D(x_0)}}}{\displaystyle{\prod_{m\in\bbN\backslash\{k\}} \bigg(1 - \f{\lambda_k\big(H^D\big)}{\lambda_m\big(H^D\big)}\bigg)}}.   \lb{2.13}
\end{split}
\end{align}
\end{theorem}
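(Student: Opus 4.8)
The plan is to reproduce the residue argument of \eqref{1.2}--\eqref{1.4}, with the diagonal Green's function $G^D(z,x_0,x_0)$ in the role that the determinant ratio \eqref{1.3} plays in the matrix case. First I would restrict the spectral expansion \eqref{2.10} to $x=x'=x_0$, giving the meromorphic function
\[
G^D(z,x_0,x_0) = \sum_{k\in\bbN}\f{e_k(x_0)^2}{\la_k(H^D)-z},
\]
whose only singularities are simple poles at the $\la_k(H^D)$ with residues $-e_k(x_0)^2$. The entire theorem then reduces to the residue computation $e_k(x_0)^2 = -\operatorname{Res}_{z=\la_k(H^D)}\big[G^D(z,x_0,x_0)\big]$, once $G^D(z,x_0,x_0)$ has been rewritten as an explicit quotient of canonical products over $\sigma(H^D)$ and $\sigma(H^D_{x_0})$.

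The central step is that product representation. Using the Green's function formula in terms of $\psi_a,\psi_b$ and their Wronskian, I would write $G^D(z,x_0,x_0)=\psi_a(z,x_0)\psi_b(z,x_0)/W(\psi_b(z,\,\cdot\,),\psi_a(z,\,\cdot\,))$. Each of the three factors is entire in $z$ of order $1/2$ (Dirichlet eigenvalues on any subinterval grow quadratically), hence of genus zero, so Hadamard's factorization supplies convergent canonical products with no exponential factors. The zeros of $\psi_a(z,x_0)$ are the Dirichlet eigenvalues on $[a,x_0]$ and those of $\psi_b(z,x_0)$ the Dirichlet eigenvalues on $[x_0,b]$; their union, with a common value counted twice, is $\sigma(H^D_{x_0})$ with multiplicities $m_\ell^D(x_0)$, while the zeros of the Wronskian are exactly $\sigma(H^D)$. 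This yields
\[
G^D(z,x_0,x_0) = C\,\f{\prod_{\ell\in\bbN}\big(1-z/\la_\ell(H^D_{x_0})\big)^{m_\ell^D(x_0)}}{\prod_{n\in\bbN}\big(1-z/\la_n(H^D)\big)}
\]
for a normalization constant $C$, the hypothesis $0\notin\sigma(H^D)\cap\sigma(H^D_{x_0})$ guaranteeing that the factor $\la_k(H^D)$ and the product quotient in \eqref{2.13} are non-degenerate.

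To pin down $C$ I would exploit the large-$|z|$ behavior along the negative axis: as $z\downarrow-\infty$ the potential becomes negligible relative to $|z|$, so $G^D(z,x_0,x_0)$ is asymptotic to the free Dirichlet diagonal Green's function $\big(2\sqrt{-z}\,\big)^{-1}=\big(2|z|^{1/2}\big)^{-1}$. Feeding this into the representation above forces $C$ to be recovered as $|z|^{-1/2}$ times the reciprocal product quotient (numerator over $\sigma(H^D)$, denominator over $\sigma(H^D_{x_0})$), which is precisely the bracketed limit recorded in \eqref{2.13}; the free asymptotic is exactly what fixes both the $|z|^{-1/2}$ scaling and its numerical coefficient.

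Finally, reading off the residue at $z=\la_k(H^D)$ from the product form is routine: the simple pole comes solely from the factor $\big(1-z/\la_k(H^D)\big)$, which contributes $\la_k(H^D)$, while the remaining products evaluated at $z=\la_k(H^D)$ reproduce the quotient in \eqref{2.13}; the sign in $e_k(x_0)^2=-\operatorname{Res}$ then gives the stated positive expression. I expect the main obstacle to be the rigorous justification of the genus-zero Hadamard factorization together with the uniform control of the canonical products needed to extract the leading $|z|^{-1/2}$ term and its constant along the negative axis; a secondary delicate point is the correct bookkeeping of the multiplicities $m_\ell^D(x_0)$ when a Dirichlet eigenvalue of $[a,x_0]$ coincides with one of $[x_0,b]$.
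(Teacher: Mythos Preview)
Your proposal is correct and follows essentially the same route as the paper: Hadamard factorization of the order-$1/2$ entire functions $\psi_a(\cdot,x_0)$, $\psi_b(\cdot,x_0)$, and the Wronskian to obtain the product representation \eqref{2.14}, residue extraction at $\lambda_k(H^D)$ via \eqref{2.10} to get \eqref{2.17}, and the $z\downarrow-\infty$ asymptotic of $G^D(z,x_0,x_0)$ to pin down $C(x_0)$ as in \eqref{2.18}. One small point to reconcile: the paper records the asymptotic as $|z|^{1/2}G^D(z,x_0,x_0)\to 1$ (cf.\ \eqref{2.15}), so with your stated value $1/2$ you would not recover $C(x_0)$ as \emph{exactly} the bracketed limit in \eqref{2.13}; check this constant carefully when you write out the details.
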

\begin{proof}
Since for fixed $x \in [a,b]$, $\psi_a(z,x), \psi_b(z,x)$ and $W(\psi_a(z,\,\cdot\,), \psi_b(z,\,\cdot\,))(x)$ are entire functions with respect to $z$ of order $1/2$ (cf.\ \cite{Le49}), $G^D(z,x_0,x_0)$ has a product representation of the form
\begin{align}
G^D(z,x_0,x_0) &= C(x_0) \f{\displaystyle{\prod_{\ell \in \bbN} \bigg(1 - \f{z}{\lambda_{\ell}\big(H^D_{x_0}\big)}\bigg)^{m_{\ell}^D(x_0)}}}{\displaystyle{\prod_{k\in\bbN} \bigg(1 - \f{z}{\lambda_k\big(H^D\big)}\bigg)}},     \lb{2.14}
\\
C(x_0) &= G^D(0,x_0,x_0),
\end{align}
as $G^D(z,x_0,x_0)$ is meromorphic in $z$ with simple poles at $\sigma(H^D)$ and at most double zeros at
$\sigma(H^D_{x_0})$. Thus, \eqref{2.10} and \eqref{2.14} imply
\begin{align}
e_k(x_0)^2 &= \lim_{z \to \lambda_k(H^D)} [\lambda_k\big(H^D\big) -z] G^D(z,x_0,x_0) \no \\
&= C(x_0)  \lim_{z \to \lambda_k(H^D)} [\lambda_k\big(H^D\big) -z]
\f{\displaystyle{\prod_{j \in \bbN} \bigg(1 - \f{z}{\lambda_j\big(H^D_{x_0}\big)}\bigg)^{m_j^D(x_0)}}}{\displaystyle{\prod_{m\in\bbN} \bigg(1 - \f{z}{\lambda_m\big(H^D\big)}\bigg)}}   \no \\
&= C(x_0) \lim_{z \to \lambda_k(H^D)} [\lambda_k\big(H^D\big) -z]
\f{\displaystyle{\prod_{j \in \bbN} \bigg(1 - \f{z}{\lambda_j\big(H^D_{x_0}\big)}\bigg)^{m_j^D(x_0)}}}{\displaystyle{\prod_{m\in\bbN\backslash\{k\}} \bigg(1 - \f{z}{\lambda_m\big(H^D\big)}\bigg)
\bigg[1 - \f{z}{\lambda_k\big(H^D\big)}\bigg]}}   \no \\
&= C(x_0) \lambda_k\big(H^D\big)
\f{  \displaystyle{\prod_{j \in \bbN} \bigg(1 - \f{\lambda_k\big(H^D\big)}{\lambda_j\big(H^D_{x_0}\big)}\bigg)^{m_j^D(x_0)}}}{\displaystyle{\prod_{m\in\bbN\backslash\{k\}} \bigg(1 - \f{\lambda_k\big(H^D\big)}{\lambda_m\big(H^D\big)}\bigg)}}.                   \lb{2.17}
\end{align}

To determine the normalization constant $C(x_0)$ from the two sets of eigenvalues \eqref{2.9} one can use the representation \eqref{2.14} and the asymptotics (see, \cite[Eqs.~(3.28), (A.40)]{GS96} and the references cited therein),
\begin{equation}
\lim_{z \downarrow - \infty} |z|^{1/2} \, G^D(z,x_0,x_0) = 1,    \lb{2.15}
\end{equation}
which yield
\begin{align}
C(x_0) = \lim_{z \downarrow - \infty} |z|^{-1/2} \,
\f{\displaystyle{\prod_{n\in\bbN} \bigg(1-\f{z}{\lambda_n\big(H^D\big)}\bigg)}}
{\displaystyle{\prod_{\ell \in \bbN} \bigg(1-\f{z}{\lambda_{\ell}\big(H^D_{x_0}\big)}\bigg)^{m_{\ell}^D(x_0)}}}.   \lb{2.18}
\end{align}
\end{proof}

\begin{remark} \lb{r2.2}
If $0 \in \sigma\big(H^D\big) \cup\sigma\big(H_{x_0}^D\big)$, one can always shift $V$ by a sufficiently small additive constant so that $0$ does not belong to $\sigma\big(H^D\big) \cup \sigma\big(H_{x_0}^D\big)$. Alternatively, one can derive the analog of \eqref{2.13} directly as follows: \\[1mm]
$(i)$ Suppose $\lambda_{k_0}(H^D) = 0$ and $0 \notin \sigma\big(H_{x_0}^D\big)$. Then the same considerations as in the proof of Theorem \ref{t2.1} yield
\begin{align}
\begin{split}
e_{k_0}(x_0)^2 &= \lim_{z \downarrow - \infty} |z|^{1/2} \,
\f{\displaystyle{\prod_{n\in\bbN \backslash \{k_0\}} \bigg(1-\f{z}{\lambda_n\big(H^D\big)}\bigg)}}
{\displaystyle{\prod_{\ell \in \bbN} \bigg(1-\f{z}{\lambda_{\ell}\big(H^D_{x_0}\big)}\bigg)^{m_{\ell}^D(x_0)}}},
\\
e_{k}(x_0)^2 &= - \left[\lim_{z \downarrow - \infty} |z|^{1/2} \,
\f{\displaystyle{\prod_{n\in\bbN \backslash \{k_0\}} \bigg(1-\f{z}{\lambda_n\big(H^D\big)}\bigg)}}
{\displaystyle{\prod_{\ell \in \bbN} \bigg(1-\f{z}{\lambda_{\ell}\big(H^D_{x_0}\big)}\bigg)^{m_{\ell}^D(x_0)}}}\right] \\
& \qquad \times \lambda_k\big(H^D\big)^{-1} \,
\f{  \displaystyle{\prod_{j \in \bbN} \bigg(1 - \f{\lambda_k\big(H^D\big)}{\lambda_j\big(H_{x_0}\big)}\bigg)^{m_j^D(x_0)}}}{\displaystyle{\prod_{m\in\bbN\backslash\{k_0\}} \bigg(1 - \f{\lambda_k\big(H^D\big)}{\lambda_m\big(H^D\big)}\bigg)}}, \quad k \in \bbN \backslash \{k_0\}.
\end{split}
\end{align}
$(ii)$ Suppose $\lambda_{k_0}(H^D) = 0$ and $\lambda_{\ell_0}\big(H_{x_0}^D\big) = 0$. Then $0$ is a twice degenerate eigenvalue of $H_{x_0}^D$ and
\begin{align}
\begin{split}
e_{k_0}(x_0) &= 0,   \\
e_k(x_0)^2 &= - \left[\lim_{z \downarrow - \infty} |z|^{-3/2} \,
\f{\displaystyle{\prod_{n\in\bbN \backslash \{k_0\}} \bigg(1-\f{z}{\lambda_n\big(H^D\big)}\bigg)}}
{\displaystyle{\prod_{\ell \in \bbN \backslash \{\ell_0\}} \bigg(1-\f{z}{\lambda_{\ell}\big(H^D_{x_0}\big)}\bigg)^{m_{\ell}^D(x_0)}}}\right] \\
& \qquad \times \lambda_k\big(H^D\big)^2 \,
\f{  \displaystyle{\prod_{j \in \bbN \backslash \{\ell_0\}} \bigg(1 - \f{\lambda_k\big(H^D\big)}{\lambda_j\big(H_{x_0}\big)}\bigg)^{m_j^D(x_0)}}}{\displaystyle{\prod_{m\in\bbN\backslash\{k_0,k\}} \bigg(1 - \f{\lambda_k\big(H^D\big)}{\lambda_m\big(H^D\big)}\bigg)}}, \quad k \in \bbN \backslash \{k_0\}.
\end{split}
\end{align}
$(iii)$ Suppose $0 \notin \sigma\big(H^D\big)$ and $\lambda_{\ell_0}\big(H^D_{x_0}\big) = 0$. Then $0$ is a simple eigenvalue of $H_{x_0}^D$ and
\begin{align}
\begin{split}
e_k(x_0)^2 &= - \left[\lim_{z \downarrow - \infty} |z|^{-3/2} \,
\f{\displaystyle{\prod_{n\in\bbN} \bigg(1-\f{z}{\lambda_n\big(H^D\big)}\bigg)}}
{\displaystyle{\prod_{\ell \in \bbN \backslash \{\ell_0\}} \bigg(1-\f{z}{\lambda_{\ell}\big(H^D_{x_0}\big)}\bigg)^{m_{\ell}^D(x_0)}}}\right] \\
& \qquad \times \lambda_k\big(H^D\big)^2 \,
\f{  \displaystyle{\prod_{j \in \bbN \backslash \{\ell_0\}} \bigg(1 - \f{\lambda_k\big(H^D\big)}{\lambda_j\big(H_{x_0}\big)}\bigg)^{m_j^D(x_0)}}}{\displaystyle{\prod_{m\in\bbN\backslash\{k\}} \bigg(1 - \f{\lambda_k\big(H^D\big)}{\lambda_m\big(H^D\big)}\bigg)}}, \quad k \in \bbN.
\end{split}
\end{align}
${}$ \hfill $\diamond$
\end{remark}

Before continuing this discussion we consider an exactly solvable example next.

\begin{example} \lb{e2.3}
Let $V=0$~a.e.~on $(a,b)$, and introduce in $L^2((a,b); dx)$,
\begin{align}
& \big(H_0^D f\big)(x) = - f''(x), \quad x \in (a,b),   \no \\
& f \in \dom\big(H_0^D\big) = \big\{f \in L^2(a,b);dx) \, \big|\, f, f' \in AC([a,b]); \, f(a) = 0 = f(b);    \lb{2.19} \\
& \hspace*{8.3cm}  f'' \in L^2(a,b);dx)\big\}   \no \\
& \hspace*{2cm} = H^2((a,b)) \cap H^1_0((a,b)),   \no \\
& \big(H^D_{0,x_0} f\big)(x) = - f''(x),  \quad x \in (a,x_0) \cup (x_0,b), \no \\
& f \in \dom\big(H^D_{0,x_0}\big) = \big\{f \in L^2(a,b);dx) \, \big|\, f, f' \in AC([a,b]); \, f(a) = 0 = f(b);    \lb{2.20} \\
& \hspace*{6.75cm}   \, f(x_0) = 0; \, f'' \in L^2(a,b);dx)\big\}.   \no
\end{align}
Then one obtains,
\begin{align}
&G_0^D(z;x,x') = \frac{1}{z^{1/2} \sin\big(z^{1/2}(b-a)\big)}    \no \\
& \hspace*{2.15cm} \times \begin{cases}  \sin\big(z^{1/2} (x-a)\big) \sin\big(z^{1/2} (b-x')\big), & a \leq x \leq x' \leq b,
\\[1mm]
\sin \big(z^{1/2} (x'-a)\big) \sin\big(z^{1/2} (b-x)\big), & a \leq x' \leq x \leq b,
\end{cases}   \\
& C_0(x) = G_0^D(0,x,x) = (x-a)(b-x)/(b-a),     \lb{2.22} \\
& \sigma\big(H_0^D\big) = \big\{[k\pi/(b-a)]^2\big\}_{k \in \bbN},    \lb{2.23} \\
& e_{0,k}(x) = [2/(b-a)]^{1/2} \sin(k \pi (x-a)/(b-a)), \quad a \leq x \leq b,    \lb{2.24} \\
& \sigma\big(H^D_{0,x_0}\big) = \big\{[j\pi\big/(x_0-a)]^2\big\}_{j \in \bbN}
\cup  \big\{[\ell \pi\big/(b-x_0)]^2\big\}_{\ell \in \bbN}, \quad x_0 \in (a,b).   \lb{2.25}
\end{align}
Applying \eqref{2.17} one computes
\begin{align}
e_{0,k}(x_0)^2 &= \f{(x_0-a)(b-x_0)}{(b-a)} \f{(k \pi)^2}{(b-a)^2}
\displaystyle{\prod_{j \in \bbN} \bigg(1 - \f{k^2 (x_0-a)^2}{j^2 (b-a)^2}\bigg)}    \no \\
& \quad \times \displaystyle{\prod_{\ell \in \bbN} \bigg(1 - \f{k^2 (b-x_0)^2}{\ell^2 (b-a)^2}\bigg)} \bigg/
\displaystyle{\prod_{m \in \bbN\backslash \{k\}}  \bigg(1 - \f{k^2}{m^2}\bigg)}    \no \\
&= \f{(x_0-a)(b-x_0)}{(b-a)} \f{(k \pi)^2}{(b-a)^2}    \no \\
& \quad \times \f{\sin(k \pi (x_0-a)/(b-a))}{k \pi (x_0-a)/(b-a)}
\f{\sin(k \pi (b-x_0)/(b-a))}{k \pi (b-x_0)/(b-a)}   \no \\
& \quad \times \bigg[\prod_{m \in \bbN\backslash \{k\}}  \bigg(1 - \f{k^2}{m^2}\bigg)\bigg]^{-1}   \no \\
& = [2/(b-a)] [\sin(k \pi (x_0-a)/(b-a))]^2, \quad x_0 \in [a,b],     \lb{2.26}
\end{align}
confirming \eqref{2.24}.

Here we employed $\sin(\zeta) = \zeta \prod_{j \in \bbN} \big[1 - \zeta^2j^{-2} \pi^{-2}\big]$, $\zeta \in \bbC$, which also yields the following identity, required to arrive at \eqref{2.26},
\begin{equation}
\prod_{m \in \bbN\backslash \{k\}}  \bigg(1 - \f{k^2}{m^2}\bigg) = (-1)^{k+1}/2, \quad k \in \bbN.
\end{equation}
\end{example}

Next, we employ Example \ref{e2.3} to elaborate on the computation of $C(x_0)$ in \eqref{2.18}. In this manner the squared eigenvectors $e_k(x_0)^2$ can be expressed in terms of the two sets of eigenvalues \eqref{2.9} somewhat more explicitly.

\begin{theorem} \lb{t2.4}
Assume \eqref{2.1} and introduce the linear operators $H^D$, $H_{x_0}^D$, $H_0^D$, $H_{0,x_0}^D$,
and the spectra
$\sigma\big(H^D\big)$, $\sigma\big(H_{x_0}^D\big)$, $\sigma\big(H_0^D\big)$, $\sigma\big(H_{0,x_0}^D\big)$ as in \eqref{2.1}--\eqref{2.3}, \eqref{2.9}, \eqref{2.12a}, \eqref{2.19}, \eqref{2.20}, \eqref{2.23}, and \eqref{2.25}.
Then,
\begin{align}
e_k(x)^2 = \f{(x_0-a)(b-x_0)}{(b-a)}
\lambda_k\big(H_0^D\big)
\f{\displaystyle{\prod_{j\in\bbN}
\bigg(\f{\lambda_j\big(H^D_{x_0}\big)-\lambda_k\big(H^D\big)}{\lambda_j\big(H^D_{0,x_0}\big)}\bigg)}}
{\displaystyle{\prod_{m\in\bbN\backslash\{k\}} \bigg(\f{\lambda_m\big(H^D\big)
- \lambda_k\big(H^D\big)}{\lambda_m\big(H_0^D\big)}\bigg)}}.
\lb{2.28}
\end{align}
\end{theorem}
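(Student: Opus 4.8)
The plan is to evaluate the normalization constant $C(x_0)=G^D(0,x_0,x_0)$ that enters \eqref{2.17} by comparing with the free operator $H_0^D$, for which $C_0(x_0)=(x_0-a)(b-x_0)/(b-a)$ is known explicitly from \eqref{2.22}. First I would rewrite each factor in \eqref{2.17} as $1-\lambda_k(H^D)/\mu=(\mu-\lambda_k(H^D))/\mu$, so that, listing the eigenvalues of $H^D_{x_0}$ (resp.\ $H^D_{0,x_0}$) according to multiplicity so the exponents $m_j^D(x_0)$ may be suppressed,
\[
e_k(x_0)^2 = C(x_0)\,\lambda_k\big(H^D\big)\,
\f{\displaystyle\prod_{j\in\bbN}\f{\lambda_j\big(H^D_{x_0}\big)-\lambda_k\big(H^D\big)}{\lambda_j\big(H^D_{x_0}\big)}}
{\displaystyle\prod_{m\in\bbN\bs\{k\}}\f{\lambda_m\big(H^D\big)-\lambda_k\big(H^D\big)}{\lambda_m\big(H^D\big)}}.
\]

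The central step is to establish the identity
\[
\f{C(x_0)}{C_0(x_0)} = \prod_{n\in\bbN}\f{\lambda_n\big(H_0^D\big)}{\lambda_n\big(H^D\big)}\,
\prod_{j\in\bbN}\f{\lambda_j\big(H^D_{x_0}\big)}{\lambda_j\big(H^D_{0,x_0}\big)}.
\]
To prove it I would apply \eqref{2.18} to both pairs $\big(H^D,H^D_{x_0}\big)$ and $\big(H_0^D,H^D_{0,x_0}\big)$ and divide the two resulting limits, whereupon the factors $|z|^{-1/2}$ cancel. Regrouping the quotient of the four entire products as a single product of the termwise ratios $\f{1-z/\lambda_n(H^D)}{1-z/\lambda_n(H_0^D)}$ and $\f{1-z/\lambda_j(H^D_{0,x_0})}{1-z/\lambda_j(H^D_{x_0})}$, and passing to the limit $z\downarrow-\infty$ inside the product, sends each surviving factor $\f{\lambda_n(H^D)-z}{\lambda_n(H_0^D)-z}\to 1$ and leaves exactly the two convergent eigenvalue-ratio products above.

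Finally I would substitute this expression for $C(x_0)$ into the rewritten form of \eqref{2.17}. The factor $\prod_{j}\lambda_j(H^D_{x_0})$ coming from $C(x_0)/C_0(x_0)$ cancels the denominators $\lambda_j(H^D_{x_0})$, turning them into $\lambda_j(H^D_{0,x_0})$ and producing the numerator of \eqref{2.28}; splitting $\prod_n\lambda_n(H_0^D)/\lambda_n(H^D)$ into its $k$th factor and the rest, the prefactor $\lambda_k(H^D)$ combines with $\lambda_k(H_0^D)/\lambda_k(H^D)$ to give $\lambda_k(H_0^D)$, while the remaining $m\neq k$ factors recast the denominator as $\prod_{m\in\bbN\bs\{k\}}[\lambda_m(H^D)-\lambda_k(H^D)]/\lambda_m(H_0^D)$, which is precisely the denominator of \eqref{2.28}. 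Together with $C_0(x_0)=(x_0-a)(b-x_0)/(b-a)$ this yields the claim.

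The step I expect to be the main obstacle is the rigorous justification of interchanging $\lim_{z\downarrow-\infty}$ with the infinite products, together with the convergence of the eigenvalue-ratio products themselves. This rests on the Weyl asymptotics $\lambda_n(H^D)=[n\pi/(b-a)]^2+\Oh(1)$ and the corresponding asymptotics for the two Dirichlet pieces of $H^D_{x_0}$ on $(a,x_0)$ and $(x_0,b)$; since $V\in L^1$, the differences $\lambda_n(H^D)-\lambda_n(H_0^D)$ and $\lambda_j(H^D_{x_0})-\lambda_j(H^D_{0,x_0})$ stay bounded, so after division by $\lambda_n\sim c\,n^2$ the logarithms of the ratio products are absolutely summable and, in $z$, dominatedly so, which legitimizes both the convergence and the limit interchange. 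One must also fix a consistent subinterval-by-subinterval enumeration matching $\lambda_j(H^D_{x_0})$ with $\lambda_j(H^D_{0,x_0})$ (via \eqref{2.25}) so that these differences are genuinely controlled.
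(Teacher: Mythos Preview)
Your proposal is correct and follows essentially the same route as the paper's proof: compute $C(x_0)/C_0(x_0)$ by comparing the two instances of \eqref{2.18} (the paper phrases this as multiplying and dividing by $G_0^D(z,x_0,x_0)$ inside a single limit, which amounts to the same thing), then substitute into \eqref{2.17}. The only cosmetic difference is that, where you invoke dominated convergence for the limit interchange, the paper makes the domination explicit via the elementary two-sided bound
\[
\min\!\Big\{\tfrac{\lambda_n(H^D)}{\lambda_n(H_0^D)},1\Big\}
\le \Big|\tfrac{1-z/\lambda_n(H_0^D)}{1-z/\lambda_n(H^D)}\Big|
\le \max\!\Big\{\tfrac{\lambda_n(H^D)}{\lambda_n(H_0^D)},1\Big\},
\quad z<0,
\]
together with the convergence of the tails of $\prod_n \lambda_n(H^D)/\lambda_n(H_0^D)$ coming from the $1+O(n^{-2})$ asymptotics you already identified; it also fixes the same even/odd labeling of $\sigma(H^D_{x_0})$ versus $\sigma(H^D_{0,x_0})$ you anticipated.
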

\begin{proof}
Recalling \eqref{2.23} and \eqref{2.25}, $0 \notin \sigma\big(H_0^D\big) \cap \sigma\big(H^D_{0,x_0}\big)$, and
$\la_k(H_0^D)=[k\pi/(b-a)]^2$, $k \in \bbN$. Thus, employing \cite[Eq.~(1.6.6)]{LG64}, one has
\begin{align}
\la_k\big(H^D\big)^{1/2}  \underset{k \to \infty}{=} \f{k\pi}{b-a} + \Oh\bigg(\f{1}{k}\bigg)
\end{align}
and hence
\begin{align}
\f{\la_k\big(H^D\big)}{\la_k\big(H_0^D\big)} \underset{k \to \infty}{=} 1+\Oh\bigg(\f{1}{k^2}\bigg).
\end{align}
Similarly, labeling the eigenvalues of $H^D_{0,x_0}$ and $H^D_{x_0}$ in a convenient manner so that the even ones correspond to the problem on $(a,x_0)$ and the odd ones to the problem on $(x_0,b)$ (e.g.,
$\la_{2k}(H^D_{0,x_0})=[k\pi/(x_0-a)]^2$, $\la_{2k-1}(H^D_{0,x_0})=[k\pi/(b-x_0)]^2$, $k\in\bbN$\big), one gets
\begin{align}
    \f{\la_k\big(H^D_{x_0}\big)}{\la_k\big(H^D_{0,x_0}\big)}  \underset{k \to \infty}{=} 1+\Oh\Big(\f{1}{k^2}\Big).
\end{align}
Then it follows that the infinite products
\begin{equation}
\prod_{k\in\bbN} [\la_k\big(H^D\big)/\la_k\big(H_0^D\big)] \, \text{ and } \,
\prod_{k\in\bbN} \big[\la_k\big(H^D_{x_0}\big)\big/\la_k\big(H^D_{0,x_0}\big)\big]
\end{equation}
converge to finite nonzero values and moreover one has
\begin{align}
\begin{split}
&\lim_{n\to\infty}\prod_{k=n}^\infty \min\left\{\f{\la_k\big(H^D\big)}{\la_k\big(H_0^D\big)},1\right\}=1,
\quad
\lim_{n\to\infty}\prod_{k=n}^\infty \max\left\{\f{\la_k\big(H^D\big)}{\la_k\big(H_0^D\big)},1\right\}=1,    \\
&\lim_{n\to\infty}\prod_{k=n}^\infty \min\left\{\f{\la_k\big(H^D_{x_0}\big)}{\la_k\big(H^D_{0,x_0}\big)},1\right\}=1,
\quad
\lim_{n\to\infty}\prod_{k=n}^\infty \max\left\{\f{\la_k\big(H^D_{x_0}\big)}{\la_k\big(H^D_{0,x_0}\big)},1\right\}=1.
\lb{2.33}
\end{split}
\end{align}
Next, consider the Green's function $G_0^D(z,x,x')$ for the operator $H_0^D$. Multiplying and dividing by
$G_0^D(z,x_0,x_0)$ under the limit in \eqref{2.18} and utilizing \eqref{2.14} and \eqref{2.15} for $G_0^D(z,x_0,x_0)$
one obtains (cf.\ \eqref{2.22})
\begin{align}
C(x_0) &= \lim_{z \downarrow - \infty} G_0^D(z,x_0,x_0) \,
\f{\displaystyle{\prod_{k\in\bbN} \bigg(1-\f{z}{\lambda_k\big(H^D\big)}\bigg)}}
{\displaystyle{\prod_{\ell \in \bbN} \bigg(1-\f{z}{\lambda_{\ell}\big(H^D_{x_0}\big)}\bigg)}}    \no
\\
&= C_0(x_0)\lim_{z \downarrow - \infty}
\f{\displaystyle{\prod_{\ell \in \bbN} \bigg(1-\f{z}{\lambda_{\ell}\big(H^D_{0,x_0}\big)}\bigg)}}
{\displaystyle{\prod_{k\in\bbN} \bigg(1-\f{z}{\lambda_k\big(H_0^D\big)}\bigg)}}
\f{\displaystyle{\prod_{k\in\bbN} \bigg(1-\f{z}{\lambda_k\big(H^D\big)}\bigg)}}
{\displaystyle{\prod_{\ell \in \bbN} \bigg(1-\f{z}{\lambda_{\ell}\big(H^D_{x_0}\big)}\bigg)}}    \no
\\
&= C_0(x_0)\lim_{z \downarrow - \infty}
{\displaystyle\prod_{\ell\in\bbN} \f{1-z/\lambda_{\ell}\big(H^D_{0,x_0}\big)}{1-z/\lambda_{\ell}\big(H^D_{x_0}\big)}}\bigg/
{\displaystyle\prod_{k\in\bbN}
\f{1-z/\lambda_k\big(H_0^D\big)}{1-z/\lambda_k\big(H^D\big)}}.              \lb{2.34}
\end{align}
Without loss of generality we assumed implicitly in arriving at \eqref{2.34} that
$0 \notin \sigma\big(H^D\big) \cap \sigma\big(H^D_{x_0}\big)$. (Again, one could temporarily shift $V$ by a sufficiently small additive constant and note that \eqref{2.28} only depends on differences of eigenvalues of $H^D$, resp., $H^D_{x_0}$.)

Next, we note that for any $n\in\bb N$,
\begin{align}
\lim_{z \downarrow - \infty}
{\displaystyle\prod_{\ell=1}^n \f{1-z/\lambda_{\ell}\big(H^D_{0,x_0}\big)}{1-z/\lambda_{\ell}\big(H^D_{x_0}\big)}}\bigg/
{\displaystyle\prod_{k=1}^n
\f{1-z/\lambda_k\big(H_0^D\big)}{1-z/\lambda_k\big(H^D\big)}} =
{\displaystyle\prod_{\ell=1}^n \f{\lambda_{\ell}\big(H^D_{x_0}\big)}{\lambda_{\ell}\big(H^D_{0,x_0}\big)}}\bigg/
{\displaystyle\prod_{k=1}^n
\f{\lambda_k\big(H^D\big)}{\lambda_k\big(H_0^D\big)}}.                      \lb{2.35}
\end{align}
For all sufficiently large $\ell,k$ the eigenvalues $\lambda_{\ell}(H^D_{x_0}), \lambda_k(H^D),
\lambda_{\ell}(H^D_{0,x_0}), \lambda_k(H_0^D)$ are positive and hence for all $z<0$ we have the estimates
\begin{align}
\min\left\{
\f{\lambda_{\ell}\big(H^D_{x_0}\big)}{\lambda_{\ell}\big(H^D_{0,x_0}\big)},1
\right\}
&\leq
\left|\f{1-z/\lambda_{\ell}\big(H^D_{0,x_0}\big)} {1-z/\lambda_{\ell}\big(H^D_{x_0}\big)}\right|
\leq
\max\left\{
\f{\lambda_{\ell}\big(H^D_{x_0}\big)}{\lambda_{\ell}\big(H^D_{0,x_0}\big)},1
\right\},
\\
\min\left\{\f{\lambda_k\big(H^D\big)}{\lambda_k\big(H_0^D\big)},1\right\}
&\leq
\left|\f{1-z/\lambda_k\big(H_0^D\big)}{1-z/\lambda_k\big(H^D\big)}\right|
\leq
\max\left\{\f{\lambda_k\big(H^D\big)}{\lambda_k\big(H_0^D\big)},1\right\}.
\end{align}
Combining these estimates with \eqref{2.33} and \eqref{2.35} then shows that the limit in \eqref{2.34} can be evaluated term by term. In addition, noting that $C_0(x)=G_0^D(0,x,x)=(x-a)(b-x)/(b-a)$ by \eqref{2.22}, one obtains
\begin{align}
C(x_0) &= \f{(x_0-a)(b-x_0)}{(b-a)}
{\displaystyle\prod_{\ell\in\bbN} \f{\lambda_{\ell}\big(H^D_{x_0}\big)}{\lambda_{\ell}\big(H^D_{0,x_0}\big)}}\bigg/
{\displaystyle\prod_{k\in\bbN}
\f{\lambda_k\big(H^D\big)}{\lambda_k\big(H_0^D\big)}}.                          \lb{2.38}
\end{align}
Finally, combining \eqref{2.38} with \eqref{2.17} yields \eqref{2.28}.
\end{proof}

\begin{remark} \lb{r2.5}
$(i)$ This is just the tip of the iceberg as more general situations can be discussed along similar lines: Three-coefficient  Sturm--Liouville operators in $L^2((a,b); r(x)dx)$ generated by differential expressions of the type
\begin{equation}
r(x)^{-1} [(d/dx) p(x) (d/dx) + q(x)];
\end{equation}
non-self-adjoint operators as long as the analog of $\lambda_k(H^D)$ has algebraic multiplicity equal to one (i.e., the associated Riesz projection is one-dimensional); other one-dimensional systems such as Jacobi operators, CMV operators, etc.; other boundary conditions (Neumann, Robin, etc.); in principle, this circle of ideas extends to some higher-order one-dimensional systems. \\[1mm]
$(ii)$ The standard results on ``two spectra determine the potential uniquely" (see \cite{Bo46}, \cite{Bo52}, \cite{Le49}) and the corresponding reconstruction results of the potential (cf.\ \cite{Le68}, \cite[Ch.~3]{Le87}, \cite{LG64}, \cite[Sects.~6.9, 6.11]{LS91}, \cite{Ma73}, \cite[Sect.~3.4]{Ma11}) differ from the results discussed in this note as the ``two-spectra results'' vary the boundary condition on one end, but keep the one at the opposite endpoint fixed. However, the ``three spectra results'' in \cite{BPY16}, \cite{GS99}, \cite{Pi99} are related as they include as a special case the situation of Dirichlet boundary conditions on the intervals $(a,b)$, $(a,x_0)$ and $(x_0,b)$ associated with the Dirichlet Schr\"odinger operators $H^D$ and $H_{(a,x_0)}^D$, $H_{(x_0,b)}^D$, where $H_{x_0}^D = H_{(a,x_0)}^D \oplus H_{(x_0,b)}^D$, and $H_{(a,x_0)}^D$ and $H_{(x_0,b)}^D$ represent the Dirichlet Schr\"odinger operators associated with the intervals $(a,x_0)$ and $(x_0,b)$, respectively. The result in \cite{BPY16}, \cite{GS99}, \cite{Pi99} most relevant to our situation then reads as follows: \\[1mm]
{\it Suppose that the three sets $\sigma(H^D)$, $\sigma(H_{(a,x_0)}^D)$, and $\sigma(H_{(x_0,b)}^D)$ are mutually disjoint. Then $\sigma(H^D)$, $\sigma(H_{(a,x_0)}^D)$, and $\sigma(H_{(x_0,b)}^D)$ determine
$V(\,\cdot\,)$ uniquely a.e.~on $[a,b]$.} (It can be shown that there are counterexamples to this statement if the disjointness condition of the three spectra is violated, see \cite{GS99}). \\[1mm]
$(iii)$ The idea of using spectra associated with the pair of operators $(H^D,H^D_{x_0})$ is closely related to the concept of Krein's spectral shift function $\xi (\, \cdot \,; H^D,H^D_{x_0})$ for this pair. The latter function is  explicitly given by
\begin{equation}
\xi \big(\lambda; H^D,H^D_{x_0}\big) = \pi^{-1}
\arg\big(\lim_{\varepsilon \downarrow 0} \big(G^D(\lambda + i \varepsilon, x,x\big)\big),
\quad x \in (a,b), \; \text{for a.e.~$\lambda \in \bbR$},
\end{equation}
such that \eqref{2.5} can be complemented by
\begin{align}
\tr\Big(\big(H^D_{x_0} - z I\big)^{-1} - \big(H^D - z I\big)^{-1}\Big) &= - \f{d}{dz} \ln\big(G^D(z,x_0,x_0)\big),
\no \\
&= - \int_{E_0}^{\infty} \f{\xi \big(\, \cdot \,; H^D,H^D_{x_0}\big) \, d\lambda}{(\lambda - z)^2},   \\
& \hspace*{-2.8cm} E_0 = \inf\big(\sigma\big(H^D\big)\big), \quad z \in \bbC \backslash \big(\sigma\big(H^D_{x_0}\big) \cup \sigma(H)\big).   \no
\end{align}
In the compact interval case at hand, $\xi (\, \cdot \,; H^D,H^D_{x_0})$ is a piecewise constant function, normalized to be $0$ for
$\lambda < E_0$, jumping by $1$ (resp., $-1$) at each eigenvalue of $H^D$ (resp., $H^D_{x_0}$). One can now use the exponential Herglotz--Nevanlinna representation of $\ln\big(G^D (z,x_0,x_0)\big)$ in terms of the
measure $\xi \big(\, \cdot \,; H^D,H^D_{x_0}\big) \, d\lambda$ to obtain an alternative derivation of
$G^D (z,x_0,x_0)$ as in \eqref{2.14}, \eqref{2.18}. In the related situation where $(a,b) = \bbR$ and $V$ is continuous and bounded from below, this circle of ideas has been used in \cite{GS96a} to derive a trace formula for the potential $V(x)$ in terms of $\xi (\, \cdot \,; H^D,H^D_{x})$, $x \in \bbR$; moreover, a variety of inverse spectral questions associated with half-line and real line problems involving Krein's spectral shift function were discussed in \cite{GS96}. \hfill $\diamond$
\end{remark}

\noindent {\bf Acknowledgments.} F.G.\ is indebted to Yuri Tomilov for kindly pointing out to him the archive submission  by P.\ B.\ Denton, S.\ J.\ Parke, T.\ Tao, and X.\ Zhang \cite{DPTZ19}. M.Z.\ is grateful for the hospitality of the Mathematics Department of Baylor University, where much of this work was done. Moreover, we are indebted to the anonymous referee for a very careful reading of our manuscript.



\begin{thebibliography}{99}
%
\bi{Bo46} G.\ Borg, {\it Eine Umkehrung der Sturm--Liouvilleschen
Eigenwertaufgabe}, Acta Math. {\bf 78}, 1--96 (1946) (German).
%
\bi{Bo52} G.\ Borg, {\it Uniqueness theorems in the spectral theory of
$y''+(\lambda-q(x))y=0$}, Proc.\ 11th Scandinavian Congress of
Mathematicians, Johan Grundt Tanums Forlag, Oslo, 1952, pp.\ 276--287.
%
\bi{BPY16} O.\ Boyko, V.\ Pivovarchik, and C.\ F.\ Yang, {\it On solvability of three spectra problem}, Math. Nachr.
{\bf 289}, 1727--1738 (2016).
%
\bi{Da07} E.\ B.\ Davies, {\it Linear Operators and their Spectra}, Cambridge Studies in Advanced Math., Vol. 106, Cambridge Univ. Press, Cambridge, UK, 2007.
%
\bi{DPTZ19} P.\ B.\ Denton, S.\ J.\ Parke, T.\ Tao, and X.\ Zhang, {\it Eigenvectors from eigenvalues:
A survey of a basic identity in linear algebra}, \arxiv{1908.03795v3}.
%
\bi{GS96a} F.\ Gesztesy and B.\ Simon, {\it The xi function}, Acta Math. {\bf 176}, 49--71 (1996).
%
\bi{GS96} F.\ Gesztesy and B.\ Simon, {\it Uniqueness theorems in inverse
spectral theory for one-dimensional Schr\"odinger operators},
Trans. Amer. Math. Soc. {\bf 348}, 349--373 (1996).
%
\bi{GS99} F.\ Gesztesy and B.\ Simon, {\it On the determination of a
potential from three spectra}, in {\it Differential Operators and Spectral Theory},
V.\ Buslaev, M.\ Solomyak, and D.\ Yafaev (eds.), Amer. Math. Soc.
Transl. Ser.\ 2, {\bf 189}, 85--92 (1999).
%
\bi{Ja34} C.\ G.\ J.\ Jacobi, {De binis quibuslibet functionibus homogeneis secundi ordinis per
substitutiones lineares in alias binas tranformandis, quae solis quadratis variabilium
constant; una cum variis theorematis de tranformatione etdeterminatione integralium
multiplicium}, J. reine angew. Math. {\bf 12}, 1--69 (1834).
%
\bi{Le49} N.\ Levinson, {\it The inverse Sturm--Liouville problem},
Mat. Tidskr.\ B, 25--30 (1949).
%
\bi{Le68} B.\ M.\ Levitan, {\it On the determination of a Sturm--Liouville
equation by two spectra}, Amer. Math. Soc. Transl. Ser.\ 2, {\bf 68}, 1--20 (1968).
%
\bi{Le87} B.\ M.\ Levitan, {\it Inverse Sturm--Liouville
Problems}, VNU Science  Press, Utrecht, 1987.
%
\bi{LG64} B.\ M.\ Levitan and M.\ G.\ Gasymov, {\it Determination of a
differential equation by two of its spectra}, Russian Math. Surv. {\bf 19:1}, 1--63 (1964).
%
\bi{LS91} B.\ M.\ Levitan and I.\ S.\ Sargsjan, {\it Sturm--Liouville and Dirac Operators}, Kluwer, Dordrecht, 1991.
%
\bi{Lo34} K.\ L\"owner, {\it \"Uber monotone Matrixfunktionen}, Math. Z. {\bf 38}, 177--216 (1934).
%
\bi{Ma73} V.\ A.\ Marchenko, {\it Some questions in the theory of
one-dimensional linear differential operators of the second order,
I.}, Trudy Moskov. Mat. Ob\u s\u c. {\bf 1}, 327--420 (1952)
(Russian); Engl.\ transl.\ in Amer. Math. Soc. Transl. Ser.\ 2, {\bf 101}, 1--104
(1973).
%
\bi{Ma11} V.\ A.\ Marchenko, {\it Sturm--Liouville Operators and
Applications}, rev. ed., AMS Chelsea Publishing, Amer. Math. Soc., Providence RI, 2011
%
\bi{Pi99} V.\ Pivovarchik, {\it An inverse Sturm--Liouville problem by three
spectra}, Integral Eq. Operator Theory {\bf 34}, 234--243 (1999).
%
\bibitem{RS79} M.\ Reed and B.\ Simon, {\it Methods of Modern
Mathematical Physics. III: Scattering Theory}, Academic Press, New
York, 1979.
%
\bi{Ta19} T.\ Tao, https://terrytao.wordpress.com/2019/08/13/eigenvectors-from-eigenvalues /\#comments.
%
\end{thebibliography}
\end{document}